\DeclareMathOperator{\diag}{diag}
\definecolor{orange}{RGB}{255,127,0}
\newcommand\blfootnote[1]{%
  \begingroup
  \renewcommand\thefootnote{}\footnote{#1}%
  \addtocounter{footnote}{-1}%
  \endgroup
}
\newcommand{\mytilde}{\raise.17ex\hbox{$\scriptstyle\mathtt{\sim}$}}
\def\Cay{\mathop{\rm Cay }\nolimits}
\def\Circ{\mathop{\rm Circ }\nolimits}
\def\det{\mathop{\rm det }\nolimits}
\def\diag{\mathop{\rm diag }\nolimits}
\def\mod{\mathop{\rm mod }\nolimits}
\def\Z{\ns{Z}}
\def\Z{\ns Z}
\def\e{\mbox{\boldmath $e$}}
\def\p{\mbox{\boldmath $p$}}
\def\vec0{\mbox{\boldmath $0$}}
\def\G{\Gamma}
\def\Z{\ns{Z}}
\def\G{\Gamma}
\def\Re{\mathbb R}
\def\Z{\mathbb Z}
\theoremstyle{plain}   % Cal carregar el paquet theorem.sty o amsthm.sty
\newtheorem{theorem}{Theorem}[section]
\newtheorem{proposition}[theorem]{Proposition}
\newtheorem{lemma}[theorem]{Lemma}
\begin{document}

\title{An improved Moore bound and some new optimal\\ families of mixed Abelian Cayley graphs}

\vskip-.25cm

\author{C. Dalf\'o$^a$, M. A. Fiol$^b$, N. L\'opez$^c$, J. Ryan$^d$\\
{\small $^a$Dept. de Matem\`atica, Universitat de Lleida}\\
{\small Igualada (Barcelona), Catalonia}\\
{\small {\tt cristina.dalfo@udl.cat}}\\
{\small $^{b}$Dept. de Matem\`atiques, Universitat Polit\`ecnica de Catalunya} \\
{\small Barcelona Graduate School of Mathematics} \\
{\small Barcelona, Catalonia} \\
{\small {\tt miguel.angel.fiol@upc.edu}} \\
{\small $^c$Dept. de Matem\`atica, Universitat de Lleida}\\
%C/ Jaume II 69, 25001
{\small Lleida, Spain}\\
{\small {\tt nacho.lopez@udl.cat}}\\
$^d${\small School of Electrical Engineering and Computer Science}\\
{\small The University of Newcastle,}\\
{\small Newcastle, Australia}\\
{\small{\tt {joe.ryan@newcastle.edu.au}}}
}

\date{}
\maketitle

\vskip-.5cm
\begin{abstract}
We consider the case in which mixed
graphs (with both directed and undirected edges) are Cayley graphs of Abelian groups. In this case, some Moore bounds were derived for the maximum number of vertices that such graphs can attain. We first show these bounds can be improved  if we know more details about the order of some elements of the generating set. Based on these improvements, we  present some new families of mixed graphs. For every fixed value of the degree, these families have an asymptotically large number of vertices as the diameter increases. In some cases, the results obtained are shown to be optimal.
\end{abstract}

\vskip-.25cm

\noindent\emph{Keywords:} Mixed graph, degree/diameter problem, Moore bound, Cayley graph, Abelian group, Congruences in $\Z^n$.\\
\noindent\emph{Mathematical Subject Classifications:} 05C35, 05C25, 05C12, 90B10.

\vskip-.25cm

\blfootnote{
	\begin{minipage}[l]{0.3\textwidth} \includegraphics[trim=10cm 6cm 10cm 5cm,clip,scale=0.15]{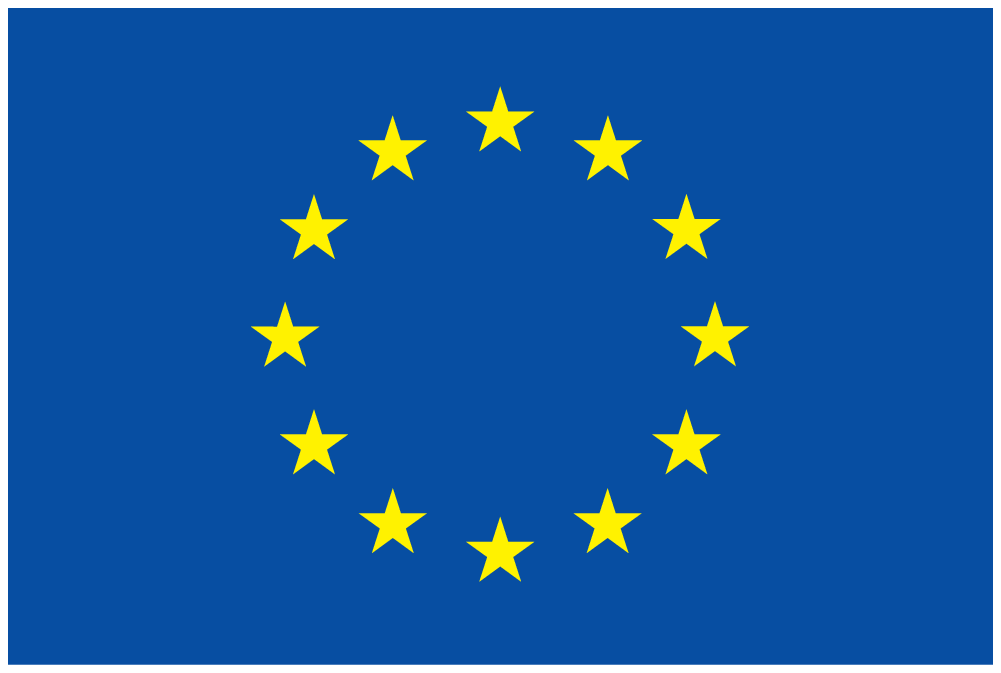} \end{minipage}  \hspace{-2cm} \begin{minipage}[l][1cm]{0.79\textwidth}
		The research of C. Dalf\'o has also received funding from the European Union's Horizon 2020 research and innovation programme under the Marie Sk\l{}odowska-Curie grant agreement No 734922.
\end{minipage}}

\section{Preliminaries}\label{sec:intro}

The degree/diameter or $(d,k)$ problem seeks to determine the largest possible graph (in terms of the number of vertices), for a given maximum degree and a given diameter. This problem has been considered for different families of graphs. For instance: bipartite graphs in Dalf\'o, Fiol, and L\'opez \cite{DFL18}; planar graphs in Fellows, Hell,  and Seyfarth \cite{Fell95} and in Tischenko \cite{Ti12}; vertex-transitive graphs in Machbeth, \v{S}iagiov\'a, \v{S}ir\'a\v{n}, and Vetr\'{\i}k \cite{Mac10}, and in \v{S}iagiov\'a and Vetr\'{\i}k\cite{Sia07}; Cayley graphs also in \cite{Mac10,Sia07}, and in  Vetr\'{\i}k \cite{Ve13}; Cayley graphs of Abelian groups in Dougherty and Faber \cite{Dou04}, and Aguil\'o, Fiol and P\'erez \cite{afp16}; and circulant graphs in Wong and Coppersmith \cite{Wong74}, 
Morillo, Fiol, and  F\`abrega \cite{mff85},  Fiol,  Yebra, Alegre, and Valero \cite{fyav87}, and in Monakhova \cite{Mona12}. For more information, see the comprehensive survey of Miller and \v{S}ir\'a\v{n} \cite{Moore-survey}. Here we deal with the case of the so-called mixed graphs. A mixed graph $G=(V,E,A)$, on a vertex set $V$, has edge set $E$ and arc set $A$. That is, we consider the presence of both undirected edges and directed edges.
Then, in the degree/diameter problem for mixed graphs, we have three parameters: a maximum undirected degree $r$, a maximum directed out-degree $z$, and a diameter $k$.

A natural upper bound for the maximum number of vertices $M(r,z,k)$ for a mixed graph under such degrees and diameter restrictions is (see Buset, El Amiri, Erskine, Miller, and P\'erez-Ros\'es \cite{mixedmoore}):
\begin{equation}\label{eq:new}
M(z,r,k) = A \frac{u_1^{k+1}-1}{u_1-1}+B\frac{u_2^{k+1}-1}{u_2-1}
\end{equation}
where, with $d=r+z$ and $v=(d-1)^2+4z$,
\begin{align}
u_1 &=\displaystyle{\frac{d-1-\sqrt{v}}{2}}, \qquad
u_2 =\displaystyle{\frac{d-1+\sqrt{v}}{2}}, \label{u's}\\
A   &=\displaystyle{\frac{\sqrt{v}-(d+1)}{2\sqrt{v}}}, \quad \
B   =\displaystyle{\frac{\sqrt{v}+(d+1)}{2\sqrt{v}}}. \label{A&B}
\end{align}

%Besides this general bound given above, researchers are also interested in some particular versions of the problem, namely when the graphs are restricted to a certain class, such as the class of bipartite graphs \cite{DFL18}, planar graphs \cite{Fell95,Ti12}, vertex-transitive graphs \cite{Mac10,Sia07}, Cayley graphs \cite{Mac10,Sia07,Ve13}, Cayley graphs of Abelian groups \cite{Dou04}, or circulant graphs \cite{Wong74,Mona12}. In this paper we are concerned with mixed Abelian Cayley graphs.
%
%For most of these graph classes there exist Moore-like upper bounds, which in general are smaller than the Moore bound for general graphs, although some of them are quite close to the Moore bound. For example, the Moore-like upper bound for bipartite mixed graphs is (when $r>0$):
%\begin{equation}
%\label{Moore-mix-bip}
%M_B(r,z,k)=
%2\left(A\,\frac{u_1^{k+1}-u_1}{u_1^2-1}+ B\,\frac{u_2^{k+1}-u_2}{u_2^2-1}\right),
%\end{equation}
%where $u_1$, $u_2$, $A$, and $B$ are given by \eqref{u's} and \eqref{A&B} (see Dalf\'o, Fiol, and L\'opez \cite{DFL18}).

The upper bound on the maximum number of vertices for mixed Abelian Cayley graphs was given by
L\'opez, P\'erez-Ros\'es, and Pujol\`as in \cite{LOPEZ2016145,lpp17} by using recurrences and generating functions. Let $\Gamma$ be an Abelian group, and let $\Sigma$ be a generating set of $\Gamma$ containing $r_{\alpha}$ involutions and $r_{\omega}$ pairs of generators and their inverses, and $z_{\omega}$ additional generators, whose inverses are not in $\Sigma$ (the  $r_{\omega}$ pairs and $z_{\omega}$ generators have undetermined orders). Thus, the Cayley graph $\Cay(\Gamma, \Sigma)$ is a mixed graph with undirected degree $r=r_{\alpha}+2r_{\omega}$, and directed out-degree $z=z_{\omega}$.
An upper bound for the number of vertices of $\Cay(\Gamma, \Sigma)$, as a function of the diameter $k$, is
\begin{equation}
\label{eq:upper1}
M_{AC}(r_{\alpha},r_{\omega},z_{\omega},k)=\sum_{i=0}^k {r_{\omega}+z_{\omega}+i \choose i}{r_{\alpha}+r_{\omega} \choose k-i}.
\end{equation}
Recently, Dalf\'o, Fiol, and L\'opez \cite{dfl18}, by using a more direct combinatorial reasoning, obtained the following alternative expression for the same bound.
\begin{equation}
\label{eq:upper2}
M_{AC}(r_{\alpha},r_{\omega},z_{\omega},k)=\sum_{i=0}^{r_{\omega}}{r_{\omega}\choose i}2^i\sum_{j=0}^{r_{\alpha}}{r_{\alpha}\choose j}{k+z_{\omega}-j\choose i+z_{\omega}}.
\end{equation}

In particular, \eqref{eq:upper2} yields the known Moore bounds for the Abelian Cayley digraphs $(r_{\alpha}=r_{\omega}=0)$, and Abelian Cayley graphs with no involutions $(r_{\alpha}=z_{\omega}=0)$. Namely,
$$
M_{AC}(0,0,z_{\omega},k)={k+z_{\omega}\choose z_{\omega}}\qquad\mbox{and}\qquad
M_{AC}(0,r_{\omega},0,k)=\sum_{i=0}^{r_{\omega}} 2^i{r_{\omega}\choose i}{k\choose i},
$$
respectively. See Wong and Coppersmith \cite{Wong74} for the former, and Stanton and Cowan \cite{sc70} for the latter.

Dalf\'o, Fiol, and L\'opez \cite{dfl19} proved that the Moore bound on mixed Abelian Cayley graphs satisfies some symmetries.

\begin{lemma}[\cite{dfl19}]
	For any integer $\nu$ such that $-r_2\le \nu\le \min\{r_1,z\}$,  the Moore bound for the mixed Abelian Cayley graphs satisfies
	\begin{equation}
	\label{symmetry}
	M_{AC}(r_1,r_2,z,k)=M_{AC}(r_1-\nu,r_2+\nu,z-\nu,k).
	\end{equation}
\end{lemma}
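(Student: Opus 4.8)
The plan is to read the identity off directly from the closed form \eqref{eq:upper1}, which is far more transparent for this purpose than \eqref{eq:upper2}. Write the parameters of \eqref{eq:upper1} as $(r_\alpha,r_\omega,z_\omega)=(r_1,r_2,z)$. The point I would isolate first is that \eqref{eq:upper1} involves the triple $(r_1,r_2,z)$ only through the two combinations $r_\omega+z_\omega=r_2+z$ and $r_\alpha+r_\omega=r_1+r_2$: the summand is $\binom{r_\omega+z_\omega+i}{i}\binom{r_\alpha+r_\omega}{k-i}$, and no other mix of the three parameters occurs. Now apply the substitution $(r_1,r_2,z)\mapsto(r_1-\nu,r_2+\nu,z-\nu)$; both combinations are manifestly invariant, since $(r_2+\nu)+(z-\nu)=r_2+z$ and $(r_1-\nu)+(r_2+\nu)=r_1+r_2$. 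Hence every term of the sum in \eqref{eq:upper1}, and therefore the sum itself, is unchanged, which is exactly \eqref{symmetry}. The hypothesis $-r_2\le\nu\le\min\{r_1,z\}$ is used only at the very end, to guarantee that $r_1-\nu$, $r_2+\nu$ and $z-\nu$ are all nonnegative, so that the right-hand side of \eqref{symmetry} is again the Moore bound of an admissible configuration (with $r_1-\nu$ involutions, $r_2+\nu$ undirected pairs, and $z-\nu$ pure arcs).

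As a remark, I would also sketch a more conceptual, combinatorial route. The quantity $M_{AC}(r_1,r_2,z,k)$ counts the tuples $(\epsilon_1,\dots;\,m_1,\dots;\,n_1,\dots)$ of exponents attached to the $r_1$ involutions, the $r_2$ undirected pairs, and the $z$ pure arcs, with $\epsilon_t\in\{0,1\}$, $m_t\in\Z$, $n_t\in\Z_{\ge0}$, subject to the length budget $\sum\epsilon_t+\sum|m_t|+\sum n_t\le k$ (this is consistent with all the special cases recorded after \eqref{eq:upper2}). Pairing one involution with one pure arc, one gets a length-preserving bijection $\{0,1\}\times\Z_{\ge0}\to\Z$, because the number of $(\epsilon,n)$ with $\epsilon+n=\ell$ equals the number of $m\in\Z$ with $|m|=\ell$ (namely $1$ when $\ell=0$ and $2$ when $\ell\ge1$). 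This realizes the elementary move $(r_1,r_2,z)\mapsto(r_1-1,r_2+1,z-1)$; iterating it, and reversing it, gives \eqref{symmetry} for every $\nu$ in the stated range.

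I do not expect a real obstacle here; the whole question is essentially which representation to start from. Via \eqref{eq:upper1} the invariance is immediate by inspection, whereas starting from \eqref{eq:upper2} the same claim turns into a genuine Vandermonde-type binomial identity in a double sum, which is the computational route I would deliberately avoid. In the combinatorial argument, the one step that needs a line of care is checking that the bijection respects the global budget $\le k$; this is automatic, since it preserves length term by term, so the constraint is mapped onto itself.
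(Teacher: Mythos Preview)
Your argument is correct: formula \eqref{eq:upper1} depends on $(r_1,r_2,z)$ only through $r_2+z$ and $r_1+r_2$, both of which are invariant under $(r_1,r_2,z)\mapsto(r_1-\nu,r_2+\nu,z-\nu)$, so \eqref{symmetry} follows immediately; the range condition on $\nu$ is exactly what keeps all three new parameters nonnegative. The combinatorial bijection you sketch is also sound.

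There is nothing to compare against here, though: the present paper does not prove this lemma at all but merely quotes it from \cite{dfl19}. Your first route via \eqref{eq:upper1} is in fact the natural one-line justification, and it is hard to imagine the original proof being materially different.
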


\subsection{Abelian Cayley graphs from congruences in $\mathbb{Z}^n$}
\label{smith}

Let $M$ be a $n\times n$ nonsingular integral matrix, and $\Z^{n}$  the additive group of $n$-vectors with
integral components. The set $\Z^{n}M$, whose elements are linear combinations (with integral coefficients) of the rows of $M$ is said to be the lattice generated by $M$.
The concept of congruence in $\Z$ has the following natural
generalization to $\Z^{n}$ (see Fiol \cite{f87}). Let $u,v\in \Z^{n}$. We say that
{\it $u$  is congruent with $v$ modulo $M$}, denoted by
$u \equiv v\pmod{M}$, if
\begin{equation}
\label{eq2}
u-v \in \Z^{n}M.
\end{equation}
The Abelian quotient group $\Z^{n}/\Z^{n}M$ is referred to as the
{\it group of integral vectors modulo $M$}.
In particular,
when $M=\diag (m_{1}, \ldots ,m_{n})$, the group $\Z^{n}/\Z^{n}M$ is the direct product of the cyclic
groups $\Z_{m_{i}}$, for $i=1, \ldots ,n$.

Let $M$ be an $n \times n$ integral matrix as above.
Let $A=\{a_{1},\ldots, a_{d}\} \subseteq \Z^{n}/\Z^{n}M$. The {\it multidimensional
	$($d-step$)$ circulant digraph $G(M,A)$} has as vertex set the integral
vectors modulo $M$, and every vertex $u$ is adjacent to the vertices
$u+A\pmod{M}$. As in the case of digraphs, the {\it
	multidimensional $($d-step$)$ circulant graph $G(M,A)$} is defined similarly just requiring $A=-A$.
Clearly, a multidimensional circulant (digraph, graph, or mixed graph) is a Cayley graph of the Abelian group $\Gamma=\Z^{n}/\Z^{n}M$.
In our context, if $\G$ is an Abelian group with generating set $\Sigma$ containing $r_{\alpha}+2r_2+z$ generators (with the same notation as before), then there exists an integer $n\times n$ matrix $M$ with size $n=r_{\alpha}+r_2+z$ such that
$$
\Cay(\G,\Sigma)\cong \Cay(\Z^{n}/\Z^{n}M,\Sigma'\},
$$
where $\Sigma'=\{e_1,\ldots,e_{r_{\alpha}},\pm e_{r_{\alpha}+1},\ldots,\pm e_{r_{\alpha}+r_2},e_{r_{\alpha}+r_2+1},\ldots,e_{r_{\alpha}+r_2+z}\}$, and the $e_i$'s stand for the unitary coordinate vectors. For example,  the two following Cayley mixed graphs
$$
\Cay(\Z_{24},\{\pm 2,3,12\}),\quad \mbox{and}\quad
\Cay(\Z^3/\Z^3 M, \{\pm e_1,e_2,e_3\}) \mbox{\ with\ }
M=\left(
\begin{array}{ccc}
3 & -2 & 0\\
0 & 4 & 1\\
0 & 0 & 2
\end{array}
\right),
$$
are isomorphic since the Smith normal form of $M$ is $S=\diag(1,1,24)$ and
$$
S=UMV=\left(
\begin{array}{ccc}
-1 & 0 & 0\\
-4 & 1 & 0\\
-8 & 2 & -1
\end{array}
\right)\left(
\begin{array}{ccc}
3 & -2 & 0\\
0 & 4 & 1\\
0 & 0 & 2
\end{array}
\right)\left(
\begin{array}{ccc}
-1 & 0 & 2\\
-1 & 0 & 3\\
0 & 1 & -12
\end{array}
\right).
$$
Indeed, $\Z^3/\Z^3 M$ is a cyclic group of order $|\det M|=24$ and
%, according to \eqref{eq5},
the generators $\pm e_1$, $e_2$, and $e_3$ of $\Z^3/\Z^3 M$ give rise to the generators $\pm 2$, $3$, and $-12=12$ $(\mod 24)$ of $\Z_{24}$; see the last column of $V$. For more details, see the paper \cite{dfl19} by the authors.

The following basic result is a simple consequence of the close relationship between the Cartesian product of Abelian Cayley graphs
and the direct products of Abelian groups (see, for instance, Fiol \cite{f87,f95}).

\begin{lemma}[\cite{f87,f95}]
	\label{basic-lemma}
	\begin{itemize}
		\item[$(i)$]
		The Cartesian product  of the Abelian Cayley graphs $G_1=\Cay(\G_1,\Sigma_1)$ and $G_2=\Cay(\G_2,\Sigma_2)$
		is the Abelian Cayley graph $G_1\times G_2=\Cay(\G_1\times \G_2,(\Sigma_1,0)\cup (0,\Sigma_2))$.
		In terms of congruences, if $\G_1=\Z^{n_1}/\Z^{n_1}M_1$,  $\G_2=\Z^{n_2}/\Z^{n_2}M_2$, $\Sigma_1=\{e_1,\ldots,e_{n_1}\}$, and
		$\Sigma_2=\{e_1,\ldots,e_{n_2}\}$, then
		$G_1\times G_2=\Cay(\Z^{n_1+n_2}/\Z^{n_1+n_2}M, \Sigma)$, where $M$ is the block-diagonal matrix $\diag(M_1,M_2)$ and $\Sigma=\{e_1,\ldots,e_{n_1+n_2}\}$.
		\item[$(ii)$]
		Let us consider the Cayley Abelian graph $G=\Cay(\G,\{a_1,\ldots,a_n,b\})$ with diameter $D$, where $b$ is an involution. Then, the quotient graph $G'=G/K_2$, obtained from $G$ by contracting all the edges generated by $b$, is an Abelian Cayley graph on the quotient group $\G/\Z_2$, with $n$ generators and diameter $D'\in\{D-1,D\}$.
		\item[$(iii)$]
		For a given integer matrix $M$ with a row $u$, let $G=\Cay(\Z^n/\Z^nM,\{e_1,\ldots,e_n\})$ have diameter $D$. Then, for a integer $\alpha>1$, the graph $G'=\Cay(\Z^n/\Z^nM',\{e_1,\ldots,e_n,$ $2u,\ldots,\alpha u\})$, where $M'$ is obtained from $M$ multiplying $u$ by $\alpha$, has diameter $D'=D+1$.
	\end{itemize}
\end{lemma}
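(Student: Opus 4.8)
The three statements are essentially structural facts about Cayley graphs of Abelian groups expressed in the congruence language; the plan is to prove each part by exhibiting the relevant group isomorphism together with a bijection of vertex sets that preserves (or controls) distances.

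For part $(i)$, I would start from the elementary fact that the direct product of groups $\G_1\times\G_2$ carries the Cayley graph structure $\Cay(\G_1\times\G_2,(\Sigma_1,0)\cup(0,\Sigma_2))$, and that a path of length $\ell$ in this graph decomposes uniquely into a ``move in the first coordinate'' plus a ``move in the second coordinate'', which is exactly the definition of the Cartesian product $G_1\times G_2$. The congruence reformulation then follows by observing that $\Z^{n_1+n_2}/\Z^{n_1+n_2}\diag(M_1,M_2)\cong(\Z^{n_1}/\Z^{n_1}M_1)\times(\Z^{n_2}/\Z^{n_2}M_2)$ — the lattice generated by a block-diagonal matrix is the direct sum of the two sublattices — and that under this isomorphism the unit vectors $e_1,\dots,e_{n_1+n_2}$ map to $(\Sigma_1,0)\cup(0,\Sigma_2)$. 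This part is routine bookkeeping.

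For part $(ii)$, the key point is that contracting all $b$-edges (with $b$ an involution, so these edges form a perfect matching) identifies each vertex $x$ with $x+b$, which is precisely passing to the quotient group $\G/\langle b\rangle=\G/\Z_2$. The generators $a_1,\dots,a_n$ descend to generators of the quotient, giving $G'=\Cay(\G/\Z_2,\{\bar a_1,\dots,\bar a_n\})$. For the diameter bound, I would argue: any geodesic in $G$ from $x$ to $y$ projects to a walk of the same length in $G'$, so $D'\le D$; conversely, lifting a geodesic of $G'$ and using at most one extra $b$-step to land on the correct representative shows every pair of vertices of $G$ is at distance at most $D'+1$, hence $D\le D'+1$, i.e. $D'\ge D-1$. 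Combining gives $D'\in\{D-1,D\}$.

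For part $(iii)$, I would set up an explicit bijection between $\Z^n/\Z^n M$ and a natural index set for $\Z^n/\Z^n M'$, where $M'$ has the row $u$ replaced by $\alpha u$: since $|\det M'|=\alpha|\det M|$, the new group is $\alpha$ times larger, and one checks that adding the generators $2u,3u,\dots,\alpha u$ means that from any vertex one reaches, in one step, $\alpha$ ``copies'' of what was a single vertex in $G$. The statement that this raises the diameter by exactly one would be proven in two inequalities: $D'\le D+1$ because a geodesic in $G$ reaching a target can be followed by at most one step among $\{2u,\dots,\alpha u\}$ to correct the $u$-coordinate in $\Z^n/\Z^n M'$; and $D'\ge D+1$ because contracting the new generators (or quotienting by the extra factor) recovers $G$, so a $D'$-geodesic in $G'$ cannot always be shorter than $D$, and one exhibits a vertex genuinely requiring the extra step. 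The main obstacle across all three parts is part $(iii)$: making the correspondence between $\Z^n/\Z^n M$ and $\Z^n/\Z^n M'$ fully precise — in particular showing that the ``corrective'' step is always available and that some vertex really needs it — requires care with the lattice arithmetic, whereas parts $(i)$ and $(ii)$ are standard. Since the lemma is attributed to \cite{f87,f95}, I would in fact cite those sources for the detailed verifications and include only the short arguments above.
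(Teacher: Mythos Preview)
The paper gives no proof of this lemma: it is stated with attribution to \cite{f87,f95} and left at that, so there is nothing to compare your argument against. Your plan to cite those references is exactly what the paper does, and the sketches you add for parts $(i)$ and $(ii)$ are correct and standard; your caveat about part $(iii)$ is apt, since making the ``one corrective step'' argument fully precise is indeed the only point that requires real care.
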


\section{An improved bound}\label{sec:bound}

The above bound $M_{AC}(r_{\alpha},r_{\omega},z_{\omega},k)$ can be improved if we know more details about the order of some elements of the generating set.

\begin{theorem}
Let $\G$ be an Abelian group with generating set $\Sigma$ containing:
\begin{itemize}
\item
$r_{\alpha}$ involutions;
\item
$r_s$ elements of order $2s+1$, for $s=1,2,\ldots,m(\le k)$, together with their inverses;
\item
$r_{\omega}$ elements of undetermined order greater than $2k+1$, together with their inverses;
\item
$z_t$ elements of order $t+1$, for $t=2,\ldots,n(\le k)$, without their inverses (note that $z_1$ would be the same as $r_{\alpha}$);
\item
$z_{\omega}$ elements of undetermined order greater than $k+1$, without their inverses.
\end{itemize}
Then, the Cayley graph $\Cay(\Gamma, \Sigma)$ is a mixed graph with undirected degree $r=r_{\alpha}+2r_1+\cdots+2r_{m}+2r_{\omega}$, directed out-degree $z=z_2+z_3+\cdots+z_{n}+z_{\omega}$, and number of vertices $N$ satisfying the bound
\begin{align}
\label{eq:upper4}
N \le & M_{AC}(r_{\alpha},r_1,\ldots,r_{m},r_{\omega},z_2,\ldots,z_{n},z_{\omega},k) \nonumber\\
  = & \sum_{i_{\alpha}=0}^{r_{\alpha}}
     \sum_{i_{\omega}=0}^{r_{\omega}}
      {r_{\alpha} \choose i_{\alpha}}
      {r_{\omega} \choose i_{\omega}}2^{i_{\omega}}
      \sum_{i_1=0}^{r_1}\cdots \sum_{i_{m}=0}^{r_m}
 \prod_{h=1}^m\sum_{\sigma_1+\cdots+\sigma_h=i_h}{r_h\choose \sigma_1,\ldots,\sigma_h}2^{i_1+\cdots +i_m}\nonumber\\
 & %\times
 \sum_{j_2=0}^{z_2}\sum_{j_3=0}^{z_3}\cdots \sum_{j_n=0}^{z_n} \prod_{h=2}^n\sum_{\tau_1+\cdots+\tau_h=j_h}{z_h\choose \tau_1,\ldots,\tau_h} \\
  & %\times
  {z_{\omega}+k-i_{\alpha}-\sum_{s=1}^m\sum_{t=1}^s t\sigma_t-\sum_{s=1}^n\sum_{t=1}^s t\tau_t \choose i_{\omega}+z_{\omega}}.\nonumber
\end{align}
\end{theorem}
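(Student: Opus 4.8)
The plan is to bound $N$ by the number of suitably normalized integer vectors encoding walks of length at most $k$ from the identity, exploiting that $\Gamma$ is Abelian together with the prescribed orders. Write $\Gamma$ additively and fix a set $\Sigma^{*}$ of \emph{positive representatives}: one element out of each pair $\{g,-g\}$ of mutually inverse undirected generators, together with all the directed generators; so $\Sigma^{*}$ contains the $r_{\alpha}$ involutions, the $r_s$ elements of order $2s+1$ $(1\le s\le m)$, the $r_{\omega}$ undetermined undirected generators, the $z_t$ elements of order $t+1$ $(2\le t\le n)$ and the $z_{\omega}$ undetermined directed generators. Since $\Cay(\Gamma,\Sigma)$ has diameter at most $k$ (the standing assumption of the degree/diameter problem), each $v\in\Gamma$ is joined to $0$ by a walk of length at most $k$, hence $v=\sum_{g\in\Sigma^{*}}a_g\,g$ for integers $a_g$ with $a_g\ge 0$ when $g$ is directed and $\sum_{g}|a_g|\le k$ (a unit step along $g$ contributes $+g$, or $-g$ when $g$ is undirected and traversed backwards). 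Conversely every such vector $(a_g)$ is realized by a walk of length $\sum_g|a_g|\le k$, so $N=|\Gamma|$ is at most the number of these vectors; we will in fact count only those in the normalized form described next.

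Next I would pin down the canonical ranges. Replace each $a_g$ by the representative $\bar a_g$ of least absolute value in the class $a_g+o(g)\Z$, subject to $\bar a_g\ge 0$ when $g$ is directed, where $o(g)$ is the order of $g$. This preserves $\sum_g a_g g$ and, crucially, never increases $|a_g|$, so $\sum_g|\bar a_g|\le k$ still holds. For an involution this forces $\bar a_g\in\{0,1\}$; for an undirected generator of order $2s+1$ it forces $\bar a_g\in\{-s,\dots,s\}$, at cost $|\bar a_g|$; for a directed generator of order $t+1$ it forces $\bar a_g\in\{0,\dots,t\}$, at cost $\bar a_g$; and for the undetermined generators the bound $\sum_g|\bar a_g|\le k$ already yields $|\bar a_g|\le k$, while the order hypotheses (order $>2k+1$, resp.\ $>k+1$) guarantee nothing is gained by reducing, so $\bar a_g$ is left to range over $\{-k,\dots,k\}$, resp.\ $\{0,\dots,k\}$, constrained only by the global budget. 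Thus $N$ is at most the number of integer vectors $(\bar a_g)_{g\in\Sigma^{*}}$ with entries in these ranges and $\sum_{g}|\bar a_g|\le k$, and it remains to count them.

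Finally I would compute this count by stratifying on the profile of the vector and applying stars-and-bars. Selecting which $i_{\alpha}$ of the $r_{\alpha}$ involutions have $\bar a_g=1$ gives $\binom{r_{\alpha}}{i_{\alpha}}$ choices at cost $i_{\alpha}$. For each $s\in\{1,\dots,m\}$, recording that $\sigma_t$ of the $r_s$ generators of order $2s+1$ have $|\bar a_g|=t$, $t=1,\dots,s$ (so $i_s=\sigma_1+\cdots+\sigma_s$ are nonzero), contributes the multinomial $\binom{r_s}{\sigma_1,\dots,\sigma_s}$ for the choice, a factor $2^{i_s}$ for the signs of the nonzero entries, and cost $\sum_{t}t\sigma_t$; symmetrically, for each $s\in\{2,\dots,n\}$, partitioning the $z_s$ generators of order $s+1$ by their value $t\in\{1,\dots,s\}$ contributes $\binom{z_s}{\tau_1,\dots,\tau_s}$ and cost $\sum_{t}t\tau_t$, with no sign factor. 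Taking products over $s$ and collecting the sign factors as $2^{i_1+\cdots+i_m}$ produces the two inner multiple sums of \eqref{eq:upper4}. Setting $R:=k-i_{\alpha}-\sum_{s,t}t\sigma_t-\sum_{s,t}t\tau_t$ for the residual budget, the only remaining freedom is to distribute $R$ over the $i_{\omega}$ chosen undetermined undirected generators, each receiving absolute value $\ge 1$ (the factor $\binom{r_{\omega}}{i_{\omega}}2^{i_{\omega}}$ having accounted for the selection and the signs), and over the $z_{\omega}$ undetermined directed generators, each receiving a value $\ge 0$. Substituting $|\bar a_g|\mapsto|\bar a_g|-1$ on the first block and adjoining a slack variable reduces this to counting nonnegative integer solutions of a single linear equation, which equals $\binom{R+z_{\omega}}{i_{\omega}+z_{\omega}}$ by a hockey-stick summation; multiplying all the factors and summing over every profile reproduces the right-hand side of \eqref{eq:upper4}. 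The two steps needing real care rather than routine computation are the claim in step two that the modular normalization never lengthens a walk (so that the normalized vectors still surject onto $\Gamma$) and the check in step three that the cost bookkeeping matches the binomial argument $z_{\omega}+k-i_{\alpha}-\sum_{s,t}t\sigma_t-\sum_{s,t}t\tau_t$ in \eqref{eq:upper4}; the latter is essentially a careful reindexing of nested multinomial sums and is where I expect the bulk of the effort to go.
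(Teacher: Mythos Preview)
Your proposal is correct and follows essentially the same approach as the paper. The paper phrases the count as a ``balls in boxes'' argument (one box per generator, balls counting how many times each generator is used, with colour encoding sign), while you phrase it as counting normalized integer coefficient vectors $(\bar a_g)$; these are the same combinatorial object, and both arrive at the final binomial by stars-and-bars after accounting for the one-ball minimum in each of the $i_{\omega}$ selected undetermined undirected boxes and a slack box for the inequality $\sum|\bar a_g|\le k$. Your explicit modular-normalization step (and the observation that it never increases $\sum_g|a_g|$) is in fact slightly more careful than the paper's presentation, which simply asserts the balls-in-boxes representation exists.
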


\begin{proof}
A vertex $u$ at distance at most $k$ from $0$ can be represented by the situation of $k$ balls (representing the presence/absence of the edges/arcs in the shortest path from $0$ to $u$) placed in $1+r_{\alpha}+\sum_{s=1}^m r_s+r_{\omega}+\sum_{t=0}^n z_t+z_{\omega}$ boxes (representing the presence/absence of the generators) with the following conditions:
\begin{itemize}
\item[$(i)$]
One box contains the number of (white) balls of the non-existing edges/arcs. Then, such a number is just the complement to $k$ of the sum of all the balls in the other boxes.
\item[$(ii)$]
Each of the $r_{\alpha}$ boxes contains at most {\bf one} (white) ball corresponding to the edge defined by the involution.
This makes the following total number of possibilities:
\begin{equation}
\label{ii}
\sum_{i_{\alpha}=0}^{r_{\alpha}} {r_{\alpha} \choose i_{\alpha}}.
\end{equation}
\item[$(iii)$]
Each of the $r_s$ boxes, with $s=1,\ldots,m$, contains a number of {\bf at most} $s$  balls, which are either all white or all black, of the edges defined by the corresponding generator $a$ (white) or $-a$ (black) with order $2s+1$. Then, if there are exactly $i_s\in\{0,1,\ldots,r_s\}$ boxes with at least one ball, and there are $\sigma_i$  of the boxes with exactly $i$  (either white or black)  balls, for $i=1,\ldots,s$, then we have $\sigma_1+2\sigma_2+\cdots+s\sigma_s$ balls in $i_s$ boxes with $\sigma_1+\cdots+\sigma_s=i_s$, and the total number of the possible situations for the $r_s$ boxes is
 \begin{equation}
\label{iii}
\sum_{i_s=0}^{r_s}2^{i_s}\sum_{\sigma_1+\cdots+\sigma_s=i_s}{r_s\choose \sigma_1,\sigma_2,\ldots,\sigma_s},
\end{equation}
where the last terms stand for the multinomial number $\frac{r_s!}{\sigma_1!\sigma_2!\cdots\sigma_s!}$. (The term $2^{i_{s}}$ accounts for the two possible colors of all balls in each of the $i_{s}$ boxes.)
\item[$(iv)$]
Each of the $r_{\omega}$ boxes contains a number of at most $k$  balls, which are either all white or all black, corresponding to the edges defined by the generators $a$ (white) or $-a$ (black). Then,  if there are exactly $i_{\omega}\in\{0,1,\ldots,r_{\omega}\}$  nonempty boxes, then we have, for the moment, having one ball in each box, a total of
 \begin{equation}
\label{iv}
\sum_{i_{\omega}=0}^{r_{\omega}}2^{i_\omega}{r_{\omega}\choose i_{\omega} }
\end{equation}
possible situations.
\item[$(v)$]
Each of the $z_t$ boxes, with $t=2,\ldots,n$, contains a number of at most $t$ (white) balls. Then, reasoning as in $(iii)$, now we have that the number of possible situations for the $z_t$ boxes is
 \begin{equation}
\label{v}
\sum_{j_t=0}^{z_t}\sum_{\tau_1+\cdots+\tau_t=j_t}{z_t\choose \tau_1,\tau_2,\ldots,\tau_t}.
\end{equation}
\item[$(vi)$]
Each of the $z_{\omega}$ boxes contains a number of at most $k$ (white) balls corresponding to the arcs defined by the  generator $b$ (with $-b\not\in \Sigma$).
% Then,  if there are exactly $j_{\omega}\in\{0,1,\ldots,z_{\omega}\}$  nonempty boxes we have, for the moment, a total of
% \begin{equation}
%\label{vi}
%\sum_{j_{\omega}=0}^{z_{\omega}}{z_{\omega}\choose j_{\omega} }
%\end{equation}
%possible situations.
%\label{vii}
\item[$(vii)$]
Finally, there are $k-i_{\alpha}-i_{\omega}-\sum_{s=1}^m\sum_{i=1}^s i\sigma_i-\sum_{t=1}^n\sum_{j=1}^t j\tau_j$ balls left, to be placed in $1+i_{\omega}+z_{\omega}$ boxes, which gives a total of
\begin{equation}
\label{vii}
 {z_{\omega}+k-i_{\alpha}-\sum_{s=1}^m\sum_{t=1}^s t\sigma_t-\sum_{s=1}^n\sum_{t=1}^s t\tau_t \choose i_{\omega}+z_{\omega}}.
\end{equation}
\end{itemize}

 Putting all together, we obtain \eqref{eq:upper4}.
%Suppose that exactly $i_{\omega}$ of the $r_{\omega}$ boxes and $i_{\alpha}$ of the $r_{\alpha}$ boxes are non-empty. This gives a total of ${r_{\omega}\choose i_{\omega}}2^{i_{\omega}}{r_{\alpha}\choose i_{\alpha}}$ possibilities. Analogously, assume that exactly $i_2$ of the $r_2$  boxes are non-empty, where $\sigma_1$ of them have 1 ball and $\sigma_2$ of them have 2 balls (where $\sigma_1+\sigma_2=i_2$). In this case, the number of possible situations is given by ${r_2\choose \sigma_1,\sigma_2}2^{i_2}$.
%Finally, there are $k-i_{\omega}-i_{\alpha}-(\sigma_1+2\sigma_2)$ balls left to be placed in $1+i_{\omega}+z_{\omega}$ boxes, which gives a total of ${k+z_{\omega}-i_{\alpha}-\sigma_1-2\sigma_2\choose i_{\omega}+z_{\omega}}$ situations. Putting all together, we obtain \eqref{eq:upper3}.
%
%The proof of the general case \eqref{eq:upper3} goes along the same lines of reasoning. Now we must use   the fact that the number of ways of placing at most $n$ (undistinguished) balls in $m$ boxes, with the condition that no box can contain more than $2$ balls, which equals the sum of the multinomial numbers
%$$
%\sum_{\sigma_1+\cdots+\sigma_s=n}{m\choose \sigma_1,\ldots,\sigma_s}.
%$$
\end{proof}

For instance, as an example, let us consider the particular case where $\Sigma$ contains $r_{\alpha}$, $r_{2}$, $r_{\omega}$, and $z_{\omega}$ elements as above. That is, apart from the elements with undetermined order, we have  $r_{\alpha}$ involutions and $r_2$ elements of order $5$ together with their inverses.
Then, \eqref{eq:upper4} becomes
\begin{align}
N & \le M_{AC}(r_{\alpha},r_2,r_{\omega},z_{\omega},k)\nonumber\\
 & =\sum_{i_{\alpha}=0}^{r_{\alpha}} \sum_{i_{\omega}=0}^{r_{\omega}} {r_{\alpha}\choose i_{\alpha}}
 {r_{\omega}\choose i_{\omega}}2^{i_{\omega}}\sum_{i_2=0}^{r_2}\sum_{\sigma_1+\sigma_2=i_2}{r_2\choose \sigma_1, \sigma_2} 2^{i_2}{z_{\omega}+k-i_{\alpha}-\sigma_1-2\sigma_2\choose i_{\omega}+z_{\omega}},\label{eq:upper3}
\end{align}
where ${r_2\choose \sigma_1, \sigma_2}={r_2\choose \sigma_1}{r_2-\sigma_1\choose \sigma_2}$ stands for the trinomial number. \\

%% Nacho
The new upper bound \eqref{eq:upper4} is also useful to reduce the number of groups $\Gamma$ and/or generators $\Sigma$ that we need to check to find optimal graphs for given parameters. For instance, let us focus on the case $r=1$ and any $z \geq 1$. Of course, the undirected part of the Abelian Cayley graph must be generated by an involution, that is,  $r_{\alpha}=1$ and $r_2=r_3=\dots=r_{\omega}=0$, and for the directed part $z=z_2+z_3+\cdots+z_{n}+z_{\omega}$ must be satisfied. Nevertheless, the maximum value for \eqref{eq:upper4} is obtained when $z_2=z_3=\dots=z_{n}=0$ and $z_{\omega}=z$, since in this case we obtain the general upper bound given by \eqref{eq:upper1}:
\[
M_{AC}(1,0,\dots,0,z,k)=\sum_{i_{\alpha}=0}^{1} {1 \choose i_{\alpha}} {z+k-i_{\alpha} \choose z} = \frac{2k+z}{k+z} {k+z \choose z}.
\]
In the particular case $z=2$, we obtain $M_{AC}=(k+1)^2$ and if in addition $k=7$, then we have $M_{AC}=64=2^6$. There are $11$ Abelian groups of order $64$, but four or them, namely $\mathbb{Z}_2^6$, $\mathbb{Z}_2^4 \times \mathbb{Z}_4$, $\mathbb{Z}_2^2 \times \mathbb{Z}_4 \times \mathbb{Z}_4$ and $\mathbb{Z}_4^3$, can be rejected to find a largest graph since they contain elements of order at most $4$, and the upper bound $M_{AC}$ decreases to $34$ (taking $r_{\alpha}=1$, $z_3=2$ and $k=7$ in \eqref{eq:upper4}). There is also a strong condition for the generating set $\Sigma$ in the remaining groups of order $64$, that is, $\Sigma$ should contain three elements of order at least $8$. All these restrictions can be used to reduce the computation time to find optimal graphs.

\section{Some new dense families}
\label{sec:requalto1}

In this section, we deal with two families of optimal graphs with $r_\alpha=1$.

\subsection{The case $(r_{\alpha},r_{\omega},z_{\omega})=(1,2,0)$}

\begin{figure}[t]
	\begin{center}
		\includegraphics[width=5cm]{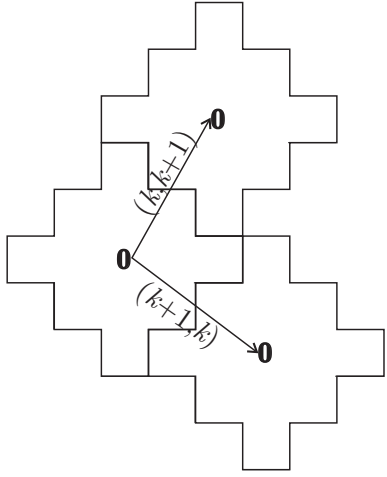}
	\end{center}
	\vskip-.25cm
	\caption{A discrete $\diamondsuit$-shaped tile and its tessellation.}
	\label{fig:diamond}
\end{figure}

To study this case, we can use the known  results of
circulant graphs with degree 4  given in Yebra, Fiol, Morillo, and Alegre \cite{yfma85}.
In that paper, the authors considered the case $r_{\alpha}=0$, $r_{\omega}=2$, and $z_{\omega}=0$, and it was proved that   the  Moore bound is $M_{AC}(0,2,0,k)=2k^2+2k+1$, in concordance with \eqref{eq:upper1} and \eqref{eq:upper2}. Besides, it was shown that the Moore bound is attained with the graphs $\Circ(\Z_N; \pm k, \pm(k+1))$, where $N=2k^2+2k+1$. Each of such graphs admits a representation like a discrete $\diamondsuit$-shaped tile with `radius' $k$ (see Figure \ref{fig:diamond}) that tessellates the plane.
Note that this shape is formed by unit squares centered at the integral points $\p\in \Re^2$ such that $\|\p\|_1\le k$. The corresponding lattice is $\Z^2M$ with matrix
$M=\left(\begin{array}{cc}
2k+1 & 1\\
k+1 & -k
\end{array} \right)$.

For the values considered here, the Moore bound is $M_{AC}(1,2,0,k)=4k^2+2$. However, this bound is not attainable, as shown in the following result.

\begin{figure}[t]
	\begin{center}
		\includegraphics[width=14cm]{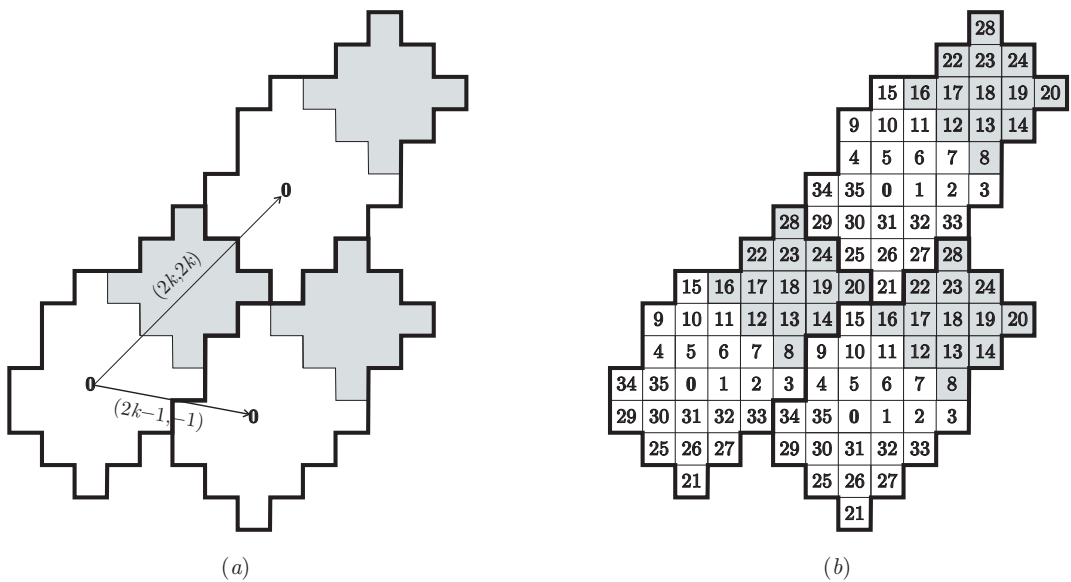}
	\end{center}
	\vskip-.5cm
	\caption{$(a)$ The general case for $N=4k^2$ in the proof of Proposition \ref{propo:diamonds};
			$(b)$ The particular case for $k=3$.}
	\label{fig:general-and-small-case-cross}
\end{figure}

\begin{proposition}
	\label{propo:diamonds}
	For any given diameter $k\ge 2$, the maximum order for a mixed Abelian Cayley graph with $r_{\alpha}=1$, $r_{\omega}=2$, and $z_{\omega}=0$ is $N=4k^2$, and the graph attaining it is $\Cay(N; \pm 1, \pm (2k-1), 2k^2)$.
\end{proposition}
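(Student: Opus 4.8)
The plan is to exhibit both an upper bound $N\le 4k^2$ and a matching construction. For the upper bound I would not use \eqref{eq:upper4} directly (it gives $4k^2+2$, which we claim is not attained), but instead argue geometrically using the congruence-in-$\Z^2$ picture from Section~\ref{smith} together with the quotient operation of Lemma~\ref{basic-lemma}$(ii)$. Contracting the involution edge $e_1$ of $\Cay(\G,\{e_1,\pm e_2,\pm e_3\})$ produces an Abelian Cayley graph with two $\pm$-generators and diameter $k'\in\{k-1,k\}$; by the Yebra–Fiol–Morillo–Alegre result cited above, such a graph has at most $2k'^2+2k'+1\le 2k^2+2k+1$ vertices, so $N\le 2(2k^2+2k+1)=4k^2+4k+2$, which is too weak. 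The sharper route is to realize the diameter-$k$ ball directly as a lattice tile in $\Z^2$ (one dimension for $e_2$, the involution $e_1$ contributing a "thickness-2" direction that, after contraction, collapses), and observe that the $\diamondsuit$-shaped tile of "radius" $k$ that would give $4k^2+2$ cannot tessellate when one of its axes carries an order-2 generator: the two white/black balls available on the $\pm 1$ coordinate are capped at $2k-1$ (not $2k$) because an edge from that generator together with $k-1$ others cannot reach the antipode of the involution in two genuinely distinct ways. I would make this precise by a parity/double-counting argument on the $1+1+2+2$ boxes of the theorem, showing two of the $4k^2+2$ putative vertices must coincide.

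Next I would give the construction. Take $\G=\Z_N$ with $N=4k^2$ and $\Sigma=\{\pm 1,\pm(2k-1),2k^2\}$; note $2k^2$ has order $2$ in $\Z_{4k^2}$, so $r_\alpha=1$, $r_\omega=2$, $z_\omega=0$ as required. It remains to check $\diam(\Cay(\Z_N;\Sigma))\le k$, i.e. every residue mod $4k^2$ is expressible as $\pm a\pm b(2k-1)+\varepsilon\, 2k^2$ with $|a|+|b|+\varepsilon\le k$, $\varepsilon\in\{0,1\}$. Geometrically this is the statement that the two half-diamonds of radius $k$ (one centered at $0$, one at $2k^2$, the translation by the involution) together tile $\Z_N$. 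I would set up the lattice $\Z^2 M'$ with $M'=\left(\begin{smallmatrix}2k & 1\\ k & -k\end{smallmatrix}\right)$ or a suitable variant whose determinant has absolute value $2k^2$, verify via the Smith normal form (as in the worked example in Section~\ref{smith}) that $\Z^2/\Z^2M'\cong\Z_{2k^2}$ with $e_1,e_2$ mapping to $1$ and $2k-1$, and then use Lemma~\ref{basic-lemma}$(ii)$ \emph{in reverse}: the diameter-$k$ graph on $\Z_N$ is obtained by "un-contracting" a diameter-$(k-1)$ or diameter-$k$ diamond graph on $\Z_{2k^2}$ along the involution, the two sheets being glued to cover all of $\Z_{4k^2}$.

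The main obstacle, and where I expect to spend the real work, is the \emph{exact} diameter computation for the construction — showing that radius $k$ (not $k+1$) suffices to cover all $4k^2$ vertices with the generators $\pm 1,\pm(2k-1),2k^2$. The clean way is to produce an explicit normal form: given $x\in\{0,\dots,4k^2-1\}$, first reduce mod $2k^2$ to land in the $\diamondsuit$-tile of the degree-$4$ subgraph $\Circ(\Z_{2k^2};\pm 1,\pm(2k-1))$ — but $2k^2=2k\cdot k$ is \emph{not} of the form $2\kappa^2+2\kappa+1$, so the classical diamond tessellation does not apply verbatim; one needs the truncated/near-diamond tile of "radius" $k$ with one row removed (accounting for the "$-2$" difference between $4k^2+2$ and $4k^2$), which is precisely the picture of Figure~\ref{fig:general-and-small-case-cross}$(a)$. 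I would verify the covering by partitioning $\{0,\dots,4k^2-1\}$ into the $2k$ residue classes mod $2k$, observing that $2k-1\equiv -1\pmod{2k}$ so that $b(2k-1)\equiv -b\pmod{2k}$ lets the $\pm 1,\pm(2k-1)$ pair sweep out an arithmetic progression of length $\sim 2k$ in each class, then using the involution $2k^2\equiv 0\pmod{2k}$ (indeed $2k^2=k\cdot 2k$) to shift between the "lower" and "upper" halves of each class — a finite and essentially bookkeeping check, but one that must be done carefully for small $k$ (the case $k=3$, Figure~\ref{fig:general-and-small-case-cross}$(b)$, serving as the sanity check). The optimality half ($N\le 4k^2$) is then the genuinely new content, and I would present it as the parity obstruction described in the first paragraph.
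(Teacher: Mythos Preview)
Your upper-bound argument has a genuine gap. You correctly note that contracting the involution and invoking the degree-$4$ result gives only $N\le 2(2k^2+2k+1)=4k^2+4k+2$, which is too weak; but your proposed ``sharper route'' --- a parity/double-counting on the boxes, or the remark that ``the $\pm 1$ coordinate is capped at $2k-1$'' --- is not an argument, and the sentence about reaching ``the antipode of the involution in two genuinely distinct ways'' does not correspond to any actual obstruction. The paper's idea, which you are missing, is very simple: split the vertex set into $V(0)$ (vertices whose shortest path from $0$ does \emph{not} use the involution $\ast$) and $V(\ast)$ (those whose shortest path does). Then $V(0)$ sits inside a $\diamondsuit$-tile of radius $k$ in the degree-$4$ sublattice, so $|V(0)|\le 2k^2+2k+1$, while every vertex of $V(\ast)$ is at distance $\le k-1$ from $\ast$ in that same sublattice, so $|V(\ast)|\le 2(k-1)^2+2(k-1)+1=2k^2-2k+1$. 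Summing gives $4k^2+2$ immediately. The drop from $4k^2+2$ to $4k^2$ is then a tessellation obstruction: no tile consisting of $\diamondsuit_k(0)\cup\diamondsuit_{k-1}(\ast)$ can tile the plane by a lattice compatible with the involution, and one must delete two extremal squares of $\diamondsuit_k(0)$ to get a tile that does. Your ``parity obstruction'' does not do this: $4k^2+2$ is already even.

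On the construction side your strategy is workable but both more laborious and less informative than the paper's. You propose to start from the generators $\pm 1,\pm(2k-1),2k^2$ in $\Z_{4k^2}$ and verify the diameter by hand via residues mod $2k$; the paper instead builds the tile first (the union $\diamondsuit_k(0)\setminus\{\text{two corners}\}\cup\diamondsuit_{k-1}(\ast)$), reads off the lattice $\Z^2M$ with $M=\left(\begin{smallmatrix}2k & 2k\\ 2k-1 & -1\end{smallmatrix}\right)$, and then recovers the cyclic group and the steps $\pm 1,\pm(2k-1)$ from the Smith normal form of $M$. That route gives the diameter for free (the tile \emph{is} the ball of radius $k$) and dovetails with the upper-bound picture. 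Incidentally, the matrix $M'=\left(\begin{smallmatrix}2k & 1\\ k & -k\end{smallmatrix}\right)$ you propose has $|\det M'|=2k^2+k$, not $2k^2$, so that particular guess would need to be fixed before your Smith-form computation could go through.
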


\begin{proof}
	Let $V(0)$ and $V(\ast)$ be the sets of vertices at minimum distance from $0$, whose respective shortest paths do, or do not, contain vertex $\ast$ (the involution). Then, all the vertices in $V(\ast)$ must be at distance at most $k-1$ from the vertex $\ast$. Hence, $|V(\ast)|\le 2(k-1)^2+2(k-1)+1=2k^2-2k+1$, and the maximum is attained with a  $\diamondsuit$-shaped tile $\diamondsuit_{k-1}(\ast)$ of radius $k-1$. Similarly, $|V(0)|\le 2k^2+2k+1$, and the maximum corresponds to a $\diamondsuit$-shaped tile $\diamondsuit_{k}(0)$ of radius $k$. Thus, $M_{AC}(1,2,0,k)=|V(0)|+|V(\ast)|=M_{AC}(0,2,0,k)+M_{AC}(0,2,0,k-1)=4k^2+2$, as claimed. However, it is easy to check that any tile formed by $\diamondsuit_{k}(0)$ and $\diamondsuit_{k-1}(\ast)$ tessellates the plane. The optimal graphs are then obtained by considering the tile formed  by $\diamondsuit_{k}(0)$ minus two `extremal' vertices and $\diamondsuit_{k-1}(\ast)$, as shown in Figure \ref{fig:general-and-small-case-cross}$(a)$ (recall that the total number of vertices must be even).  Then, the lattice turns out to be 
	$\Z^2M$ with matrix
	$M=\left(\begin{array}{cc}
	2k & 2k\\
	2k-1 & -1
	\end{array} \right)$,
	having Smith normal form 
	$$
	S=\diag(1,4k^2)=UMV=\left(\begin{array}{cc}
	0 & -1\\
	1 & 2k
	\end{array} \right)\left(\begin{array}{cc}
	2k & 2k\\
	2k-1 & -1
	\end{array} \right)\left(\begin{array}{cc}
	0 & 1\\
	1 & 2k-1
	\end{array} \right).
	$$
	Then, according to the results of Subsection \ref{smith}, the $r_{\beta}=2$ steps are $\pm 1$ and $\pm (2k-1)$; see last column of $V$. This, together with the involution $2k^2$, completes the proof.
	For example, with $k=3$, we get $N=36$ for the graph $\Circ(36;\{\pm1,\pm 5, 18\})$; see Figure \ref{fig:general-and-small-case-cross}$(b)$.
\end{proof}

\subsection{The case $(r_{\alpha},r_{\omega},z_{\omega})=(1,1,1)$}

Reasoning as in Dalf\'o, Fiol, and L\'opez \cite{dfl18}, we can obtain the optimal constructions for mixed Abelian Cayley graphs with  $r_{\alpha}=r_{\omega}=z_{\omega}=1$, that is, graphs with undirected degree $3$ and directed degree $1$.
In this case, our study is based on
the results of Morillo and Fiol \cite{mf86} dealing with the case $r_{\alpha}=0$, $r_{\omega}=1$, and $z_{\omega}=1$. In this context, they proved that the (unattainable) Moore bound is $M_{AC}(0,1,1,k)=(k+1)^2$, in concordance with \eqref{eq:upper1} and \eqref{eq:upper2}. Besides, by using plane tessellations with $T$-shaped tiles (see Figure \ref{fig:general-cases}), it was shown that the maximum number of vertices for such graphs is
\begin{equation}
\label{eq:M_{AC}0,1,1,k)}
N(k)=\left\lfloor\frac{1}{6}(2k+3)^2 \right\rfloor,
\end{equation}
and that the bound is attained in the following graphs:
\begin{itemize}
	\item
	For $k=3x$: $T$-shaped tiles with dimensions as the white tile  of Figure
	\ref{fig:general-cases}$(a)$, which we denote as $T_1(x)$; \\
	$\Circ(6x^2+6x+1;\{\pm 1,6x+3\})\cong \Cay(\Z^2/\Z^2M,\{\pm e_1,e_2\})$, with
	$M=$\\
    $\left(\begin{array}{cc}
	3x+2 & -x-1\\
	-1 & 2x+1
	\end{array}\right)$.
	\item For $k=3x-1$: $T$-shaped tiles with dimensions as the shadow tile of Figure \ref{fig:general-cases}$(a)$, which we denote as  $T_2(x)$;\\
	$\Circ(6x^2+2x;\{\pm x,3x+1\})\cong \Cay(\Z^2/\Z^2M,\{\pm e_1,e_2\})$, with
	$M=\left(\begin{array}{cc}
	3x+1 & -x\\
	0 & 2x
	\end{array}\right)$.
	\item
	For $k=3x-2$: $T$-shaped tiles with dimensions as the shadow tile of Figure
	\ref{fig:general-cases}$(c)$, which we denote as  $T_3(x)$;\\
	$\Circ(6x^2-2x;\{\pm x,3x-1\})\cong \Cay(\Z^2/\Z^2M,\{\pm e_1,e_2\})$, with
	$M=\left(\begin{array}{cc}
	3x-1 & -x\\
	-1 & 2x
	\end{array}\right)$.
\end{itemize}

\begin{figure}[t]
	\begin{center}
		\includegraphics[width=14cm]{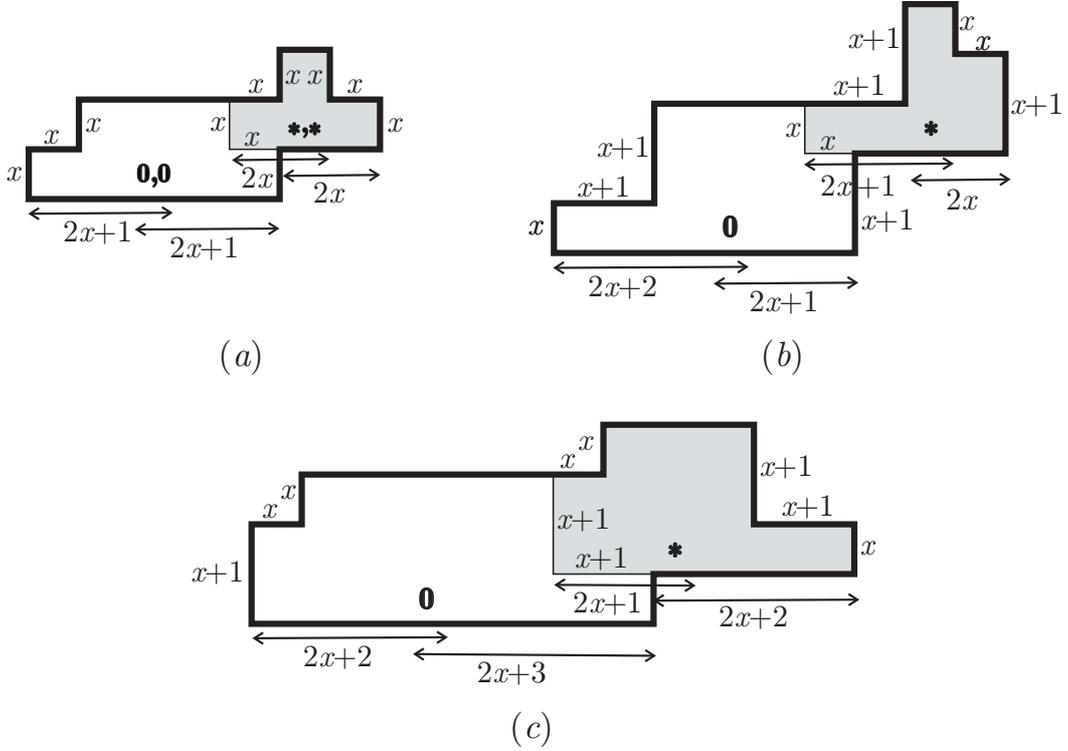}
	\end{center}
	\vskip-.5cm
	\caption{$(a)$ $\Cay(\mathbb{Z}_{6x}\times\mathbb{Z}_{2x},\{(\pm1,0),(0,1),(3x,x)\})$ with $k=3x-1$ ($x\geq 2$) and $N=12x^2$;
		$(b)$ $\Circ(N;\{\pm 1,12x^2+2x-1,6x^2+4x\})$ with $k=3x$ ($x\geq1$) and $N=12x^2+8x$;
		$(c)$ $\Circ(N;\{\pm 1,6x+5,6x^2+8x+2\})$ with $k=3x+1$ ($x\geq1$) and $N=12x^2+16x+4$.}
	\label{fig:general-cases}
\end{figure}

\begin{proposition}
%	Depending on the value of the diameter $k\ge 2$, the mixed Abelian Cayley graphs with $r_{\alpha}=r_{\omega}=z_{\omega}=1$ and maximum order are the following:
Depending on the value of the diameter $k\ge 2$, the maximum order for a mixed Abelian Cayley graph with $r_{\alpha}=r_{\omega}=z_{\omega}=1$ and the graphs attaining it are the following:
\begin{itemize}
\item[$(a_0)$]
         For $k=2$: $N=10$.\\
         $\Circ(10;\{\pm1,2,5\})$.
	    \item[$(a)$]
        For $k=3x-1$ ($x\geq 2$): $N=12x^2$.\\
         $\Cay(\mathbb{Z}_{6x}\times\mathbb{Z}_{2x},\{(\pm1,0),(0,1),(3x,x)\})$.
         		\item[$(b)$]
	For $k=3x$ ($x\geq1$): $N=12x^2+8x$.\\
		$\Circ(N;\{\pm 1,12x^2+2x-1,6x^2+4x\})$.\\
		For example: For $k=3$: $N=20$, $\Circ(N;\{\pm1,13,10\})$.
	\item[$(c)$]
	For $k=3x+1$ ($x\geq1$):	$N=12x^2+16x+4$.\\
		$\Circ(N;\{\pm 1,6x+5,6x^2+8x+2\})$.\\
		For example: For $k=4$: $N=32$, $\Circ(N;\{\pm1,11,16\})$.
	\end{itemize}
\end{proposition}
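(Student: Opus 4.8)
The strategy is to mimic the proof of Proposition~\ref{propo:diamonds}, decomposing the vertex set according to whether a shortest path uses the involution, and then applying the known results of Morillo and Fiol \cite{mf86} for the case $(r_\alpha,r_\omega,z_\omega)=(0,1,1)$ to each piece. Concretely, fix $k\ge 2$ and write $\ast$ for the involution. Let $V(0)$ be the set of vertices whose shortest path from $0$ does not use $\ast$, and $V(\ast)$ the set of those that do; then $V(\ast)$ consists of vertices at distance at most $k-1$ from $\ast$ in the subgraph generated by the remaining generators (one undirected pair and one arc), so $|V(0)|\le N(k)$ and $|V(\ast)|\le N(k-1)$ with $N(\cdot)$ as in \eqref{eq:M_{AC}0,1,1,k)}. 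Summing gives an a~priori upper bound $N\le N(k)+N(k-1)$, which one checks equals $12x^2$, $12x^2+8x$, $12x^2+16x+4$ (and $10$ for $k=2$) in the three residue classes $k\equiv 3x-1, 3x, 3x+1$. The first task is therefore a short arithmetic verification that $\lfloor(2k+3)^2/6\rfloor+\lfloor(2k+1)^2/6\rfloor$ takes exactly these values — and, crucially, that in each case the bound is \emph{attained}, which is the substance of the proposition.

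\textbf{The tessellation step.} The heart of the argument is geometric, exactly as in Proposition~\ref{propo:diamonds}: one must exhibit a plane tile, built by gluing an appropriate $T$-shaped tile $T_i(x)$ of ``radius'' $k$ (for $V(0)$) to a $T$-shaped tile $T_j(x)$ of radius $k-1$ (for $V(\ast)$) along the edge corresponding to the involution, and verify that this composite tile tessellates $\Z^2$ by translations. Since the two $T$-tiles of consecutive radii belong to different cases of the Morillo--Fiol trichotomy (because $k$ and $k-1$ fall in different residue classes mod~$3$), the combinatorial bookkeeping of which tile pairs with which has to be done separately for $k=3x-1$, $k=3x$, $k=3x+1$. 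In the two ``easy'' cases the union of the two $T$-tiles already has the right (even or matching) cardinality and tessellates directly; in the remaining case one expects to need the same adjustment as before — deleting a couple of extremal cells from the larger tile to make the total order match $N(k)+N(k-1)$ exactly and to make the shape tessellate — so the figures referenced (Figure~\ref{fig:general-cases}) are doing real work here. Once the tessellating tile is identified, its lattice of periods is read off as $\Z^2M$ for an explicit $2\times 2$ integer matrix $M$ depending on $x$.

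\textbf{From the lattice to the generators.} Having the matrix $M$, I would compute its Smith normal form $S=UMV=\diag(s_1,s_2)$ (or $\diag(1,N)$ when the group is cyclic), exactly as illustrated in Subsection~\ref{smith} and in the proof of Proposition~\ref{propo:diamonds}. The columns of $V$ translate the canonical generators $\pm e_1$ (undirected), $e_2$ (arc), and the involution into elements of $\Z^2/\Z^2 M\cong \Z_N$ (or $\Z_{6x}\times\Z_{2x}$ in case $(a)$), producing the generating sets $\{\pm1,\,12x^2+2x-1,\,6x^2+4x\}$, $\{\pm1,\,6x+5,\,6x^2+8x+2\}$, $\{(\pm1,0),(0,1),(3x,x)\}$, and $\{\pm1,2,5\}$ claimed in the statement; one then double-checks numerically ($|\det M|=N$, orders of the listed elements, a small diameter check for $k=2,3,4$) that the resulting Cayley graph indeed has the asserted diameter. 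The small base cases $k=2$ (and the first instances of $(b)$, $(c)$) are handled by direct inspection.

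\textbf{Main obstacle.} The genuine difficulty is the tessellation step: verifying that the glued tile ``$T$ of radius $k$'' $\cup$ ``$T$ of radius $k-1$'' — with whatever extremal-vertex correction is forced by parity and by the exact count $N(k)+N(k-1)$ — actually tiles the plane, and doing this uniformly across the three residue classes of $k$ while keeping track of which Morillo--Fiol tile shape occurs at radius $k$ versus radius $k-1$. Establishing the matching upper bound is routine arithmetic; pinning down the periodic tile (hence $M$, hence $\Sigma$) and confirming the diameter is exactly $k$ and not $k-1$ is where the care is needed. I would expect to lean heavily on explicit pictures for the three cases, much as the cited proof does, rather than on a closed-form argument.
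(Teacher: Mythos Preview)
Your overall strategy matches the paper's proof exactly: decompose into $V(0)$ and $V(\ast)$, bound each by the Morillo--Fiol value $N(\cdot)$ from \eqref{eq:M_{AC}0,1,1,k)}, glue the appropriate $T$-tiles, and read off the group and generators via Smith normal form. Two concrete corrections, however. First, your arithmetic claim is off: for $k=3x$ one has $N(k)+N(k-1)=(6x^2+6x+1)+(6x^2+2x)=12x^2+8x+1$, and for $k=3x+1$ one gets $12x^2+16x+5$ --- both \emph{odd}, hence one more than the stated optima. The bound is then lowered by one via the parity constraint (an involution forces $|\Gamma|$ even), and the corresponding tile must be modified by a \emph{single} cell (the paper replaces $T_2(x)$ by $T_2'(x)$ in case~$(b)$ and $T_3(x+1)$ by $T_3'(x+1)$ in case~$(c)$), not by ``a couple of extremal cells''. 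Second, this means the split is the reverse of what you wrote: only case~$(a)$ ($k=3x-1$) glues cleanly, while \emph{both} $(b)$ and $(c)$ require the adjustment. The special case~$(a_0)$ is singled out because for $x=1$ the element $(0,1)\in\Z_6\times\Z_2$ is itself an involution, so the construction of~$(a)$ yields $r=4$, $z=0$ rather than the intended $(r_\alpha,r_\omega,z_\omega)=(1,1,1)$.
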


\begin{figure}[t]
	\begin{center}
		\includegraphics[width=14cm]{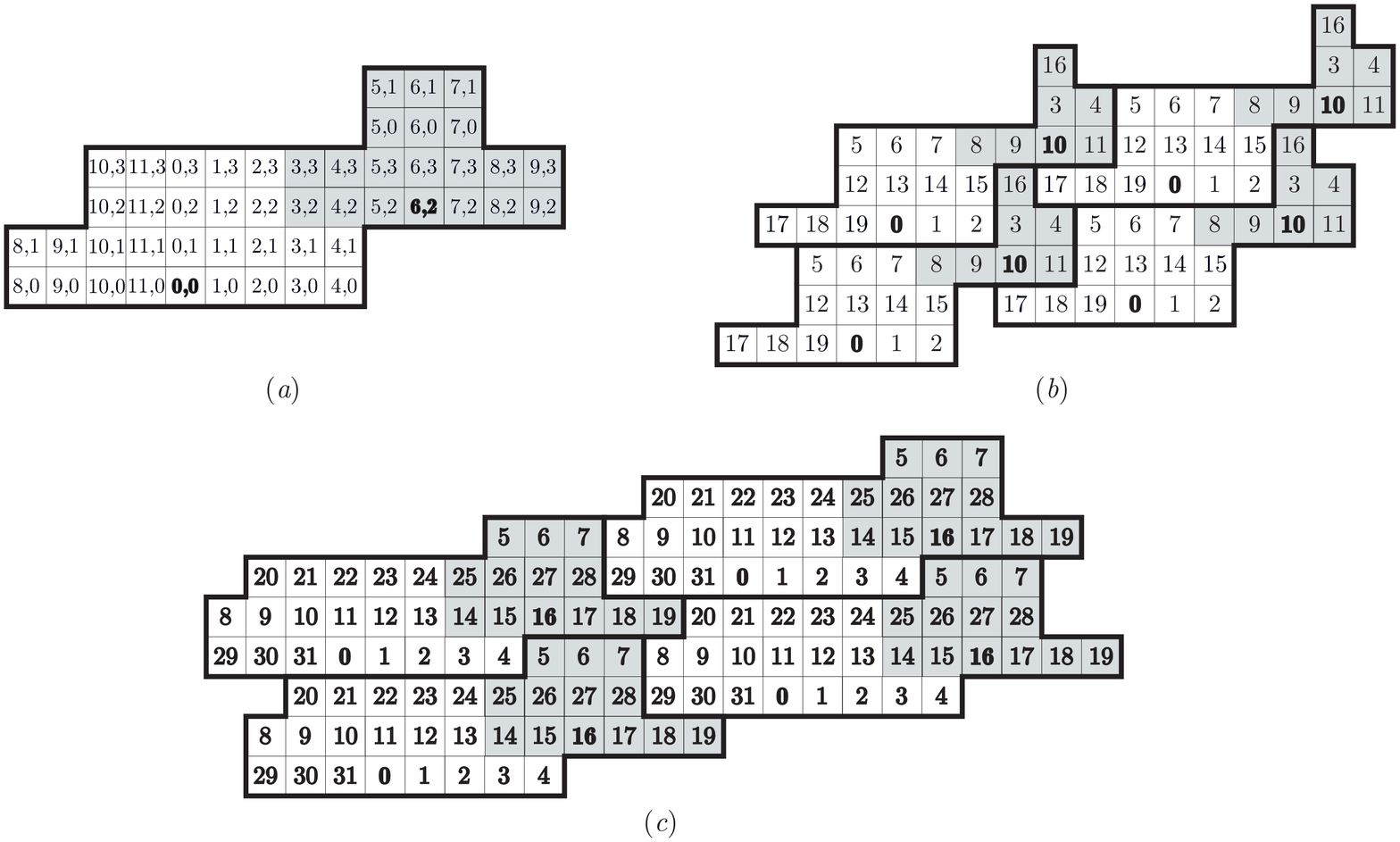}
	\end{center}
	\vskip-.5cm
	\caption{$(a)$ $\Cay(\mathbb{Z}_{12}\times\mathbb{Z}_{4},\{(\pm1,0),(0,1),(6,2)\})$ with $k=5$ and $N=48$;
		$(b)$ $\Circ(20;\{\pm1,13,10\})$ with $k=3$ and $N=20$;
		$(c)$ $\Circ(32;\{\pm1,11,16\})$ with $k=4$ and $N=32$.}
	\label{fig:small-cases}
\end{figure}

\begin{proof}
	First, note that $(a_0)$ is a special case because it not included in $(a)$ since for $x=1$ it turns out that $(0,1)$ and $(3,1)$ are both involutions in $\mathbb{Z}_{6}\times\mathbb{Z}_{2}$. So, $(a)$ would give a graph with $r=4$ and $z=0$.
	For the other (general) cases, we follow the same line of reasoning as in Proposition \ref{propo:diamonds}.  
	Thus, by using the same notation, by a given diameter $k$,	the maximum number of vertices of the vertex sets  $V(0)$ and $V(\ast)$ are given, respectively, by $N(k)$ and $N(k-1)$ in \eqref{eq:M_{AC}0,1,1,k)}. Then, depending on the value of $k$, the maximum number of vertices of a mixed Abelian Cayley graph with $r_{\alpha}=r_{\omega}=z_{\omega}=1$ is attained by joining two $T$-shaped tiles of type $T_i$, $i\in\{1,2,3\}$, as follows:
\begin{itemize}
\item[$(a)$] 
For $k=3x-1$, we take the tiles $T_2(x)$ (with diameter $k$) and $T_3(x)$ (with diameter $k-1$). Notice these tiles correspond, respectively, to the vertices in $V(0)$ and $V(\ast)$. See Figure \ref{fig:general-cases}$(a)$. The `composed' tile $T_2(x)\cup T_3(x)$, with area $N=6x^2$, tessellates the plane with lattice $\mathbb{Z}^2M$, where $M=\left(\begin{array}{cc}
6x & 0\\
0 & 2x
\end{array}\right)$. As the Smith normal form of $M$ is $S(M)=\diag(2x, 6x)$, the group $\Z^2/\Z^2M$ has rank two and it is isomorphic to $\Z_{2x} \times \Z_{6x}$, and the optimal graph is as claimed (with generators $\pm \e_1$, $\e_2$, and the involution $(3x,x)$).
\item[$(b)$] 
For $k=3x$, we should take  the tiles $T_1(x)$ (with diameter $k$) and $T_2(x)$ (with diameter $k-1$). However, this is not possible since the total area is an odd integer. Fortunately, the `composed' tile $T_1(x)\cup T'_2(x)$, where $T'_2(x)$ is a slight modification of $T_2(x)$ with area $N=6x^2+2x-1$ (see Figure \ref{fig:general-cases}$(b)$), 
tessellates the plane with lattice $\Z^2M$, where $M=\left(\begin{array}{cc}
6x+1 & 1\\
1 & 2x+1
\end{array}\right)$. The Smith normal form of this matrix is
\begin{align*}
S &=\diag(1,12x^2+8x)=UMV \\
 &=\left(\begin{array}{cc}
0 & 1\\
-1 & 6x+1
\end{array} \right)\left(\begin{array}{cc}
6x+1 & 1\\
	1 & 2x+1
	\end{array} \right)\left(\begin{array}{cc}
	1 & -2x-1\\
	0 & 1
	\end{array} \right).
\end{align*}
Consequently, the optimal graph on the group $\Z_{12x^2+8x}$ has generators $\{\pm 1, -(2x+1), 6x^2+4x\}$ or, equivalently, $\{\pm 1,6x^2+4x,12x^2+2x-1\}$ (recall that the steps of a circulant graph $G$ can be multiplied with any number relatively prime with $N$ obtaining a graph isomorphic to  $G$), as claimed.
\item[$(c)$] 
For $k=3x+1$, we should take  the tiles $T_3(x+1)$ (with diameter $k$) and $T_1(x)$ (with diameter $k-1$). As in case $(b)$, this is not possible since the total area is an odd integer. Now, the `composed' tile $T_3'(x+1)\cup T_1(x)$, where $T'_3(x+1)$ is a  modification of $T_3(x+1)$ with area $N=12x^2+16x+4$ (see Figure \ref{fig:general-cases}$(c)$)
tessellates the plane with lattice $\Z^2M$, where $M=\left(\begin{array}{cc}
6x+5 & -1\\
-1 & 2x+1
\end{array}\right)$. Then, the Smith normal form is
\begin{align*}
S &=\diag(1,12x^2+16x+4)=UMV \\
&=\left(\begin{array}{cc}
0 & -1\\
1 & 6x+5
\end{array} \right)\left(\begin{array}{cc}
6x+5 & -1\\
-1 & 2x+1
\end{array} \right)\left(\begin{array}{cc}
1 & 2x+1\\
0 & 1
\end{array} \right).
\end{align*}
Consequently, the optimal graph on the group $\Z_{12x^2+10x+4}$ has generators $\{\pm 1, 2x+1, 6x^2+8x+2\}$ or, equivalently, $\{\pm 1,6x+5,6x^2+8x+2\}$, as claimed.
\end{itemize}
	See Figure %\ref{fig:general-cases} for the general cases, and Figure 
	\ref{fig:small-cases} for some examples with diameters $k=3,4,5$.
%	For \textcolor{blue} {NOT} example: For $k=2$: $N=12$, $\Cay(\mathbb{Z}_{6}\times\mathbb{Z}_{2},\{(\pm1,0),(0,1),(3,1)\})$. \textcolor{blue}{(Observeu que aquest exemple és }
\end{proof}

Finally, we point out that the optimal values $N$ may be attained for other mixed graphs in some cases. For instance, in the case ($a_0$) we have that $\Circ(10;\{\pm2,1,5\})$ is also optimal, but it is not isomorphic to $\Circ(10;\{\pm1,2,5\})$.

\section*{Acknowledgments}
\label{sec:acknow}
The first two authors have been partially supported by the project 2017SGR1087 of the Agency for the Management of University and Research Grants (AGAUR) of the Catalan Government, and by MICINN from the Spanish Government under project PGC2018-095471-B-I00. The first and the third authors have been supported in part by grant MTM2017-86767-R of the Spanish Government.

\end{document}